 \def\@textbottom{\vskip \z@ \@plus 582pt}
 \let\@texttop\relax
\newcommand{\nocontentsline}[3]{}
\newcommand{\tocless}[2]{\bgroup\let\addcontentsline=\nocontentsline#1{#2}\egroup}
\newtheorem{theorem}{Theorem}[section]
\newtheorem{lemma}[theorem]{Lemma}
\newtheorem{definition}[theorem]{Definition}
\newtheorem{proposition}[theorem]{Proposition}
\newcommand{\Z}{\mathbb{Z}}
\renewcommand{\P}{\mathbb{P}}
 \DeclareFontFamily{U}{wncy}{}
 \DeclareFontShape{U}{wncy}{m}{n}{<->wncyr10}{}
 \DeclareSymbolFont{mcy}{U}{wncy}{m}{n}
 \DeclareMathSymbol{\Sh}{\mathord}{mcy}{"58}
\begin{document}

\title[]{Rational curves on elliptic K3 surfaces} 
\author{Salim Tayou }
\date\today
\thanks{\textit{École Normale Supérieure, 45 rue d'Ulm, 75005 Paris, France}}
\thanks{\textit{Laboratoire de Mathématiques d'Orsay, Université Paris-Sud, Orsay, France}}
\thanks{E-mail adress: salim.tayou@math.u-psud.fr}

\begin{abstract}
We prove that any non-isotrivial elliptic K3 surface over an algebraically closed field $k$ of arbitrary characteristic contains infinitely many rational curves. In the case when $\mathrm{char}(k)\neq 2,3$, we prove this result for any elliptic K3 surface. When the characteristic of $k$ is zero, this result is due to the work of Bogomolov-Tschinkel and Hassett.
\end{abstract}

\maketitle

\section{Introduction}
Let $X$ be a K3 surface over an algebraically closed field $k$. In \cite[Corollary 3.28]{bt2}, Bogomolov and Tschinkel prove that when the characteristic of $k$ is zero and $X$ admits a non-isotrivial elliptic fibration then $X$ contains infinitely many rational curves. Later, Hassett in \cite[Section 9]{hassett} handled the general case of arbitrary elliptic complex K3 surfaces. In this note, we extend the above results to the case where $k$ has positive characteristic. 
\begin{theorem}\label{principal}
Let $X$ be an elliptic K3 surface over an algebraically closed field $k$. Then $X$ contains infinitely many rational curves in the following cases:
\begin{enumerate}
\item $X$ admits a non-isotrivial elliptic fibration;
\item $\mathrm{char}(k)\neq 2,3$. 
\end{enumerate}
\end{theorem}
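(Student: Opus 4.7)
The proof strategy would follow that of Bogomolov--Tschinkel and Hassett in characteristic zero, split according to the two cases of the theorem. At the outset, I would reduce to the situation where the fibration has a section by passing to the Jacobian fibration $J(X) \to \mathbb{P}^1$, which is itself an elliptic K3 surface: via the Abel--Jacobi-type correspondence between rational multisections of $X$ and rational curves on $J(X)$, it suffices to work with $J(X)$.

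For part (1), the plan is to adapt the Bogomolov--Tschinkel construction. I would begin with at least one rational multisection $D$ on $J(X)$: the zero section itself is a smooth rational curve, but one can also start from rational components of reducible singular fibers, of which there are always some on a K3 elliptic fibration (by $e(X) = 24$). Then I would apply the multiplication-by-$N$ rational self-map $[N]: J(X) \dashrightarrow J(X)$ for integers $N$ coprime to $\mathrm{char}(k)$. Since $[N]$ is a dominant rational map defined on a dense open subset of each fiber and $D$ is rational, the image $[N](D)$ is again a rational curve. Non-isotriviality is used to guarantee the family $\{[N](D)\}_N$ contains infinitely many distinct members: by translating if necessary, one arranges that $D$ is non-torsion in the Mordell--Weil group of the generic fiber, and then the $[N]$-multiples have canonical heights growing like $N^2$, ensuring pairwise distinctness.

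For part (2), suppose the fibration is isotrivial. All smooth fibers are isomorphic to a fixed elliptic curve $E$, and the monodromy lies in a finite subgroup $G$ of $\mathrm{Aut}(E)$. When $\mathrm{char}(k) \neq 2, 3$ the automorphism group has order $2$, $4$, or $6$, so this reduces to a finite case analysis. After a finite cover $C \to \mathbb{P}^1$ trivialising the monodromy, $X$ becomes birational to a quotient $(E \times C)/G$ with $G$ acting diagonally, and the K3 condition imposes strong constraints on the ramification of $C \to \mathbb{P}^1$. In each case I would produce infinitely many rational curves by a combination of: auxiliary elliptic fibrations on $X$ that turn out to be non-isotrivial (reducing to part (1)); exceptional components in the resolution of the quotient singularities of $(E \times C)/G$; and explicit constructions exploiting the $G$-action and its fixed locus.

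The main obstacle I expect lies in part (1): confirming that $[N](D)$ remains an irreducible curve of geometric genus zero, uniformly in $N$, requires care in positive characteristic. Restricting to $\gcd(N, p)=1$ avoids inseparability of $[N]$, but one still has to rule out accidental degenerations and verify pairwise distinctness via a height argument on the Mordell--Weil lattice of the generic fiber of $J(X)$, ideally without relying on the Hodge-theoretic tools available in characteristic zero. For part (2), the exclusion of characteristics $2$ and $3$ reflects the larger automorphism groups of elliptic curves with $j$-invariant $0$ or $1728$ in those primes and the correspondingly subtler classification of isotrivial elliptic K3 surfaces there, which is likely the reason the statement in those characteristics is left open in the elliptic--but--isotrivial case.
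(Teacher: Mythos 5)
Your proposal has two genuine gaps, both located exactly where the paper has to do real work.

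First, the opening reduction to $J(X)$ does not hold. A rational multisection of degree $d$ on $X$ gives, by fibrewise Abel--Jacobi, a section of $J^d(X)$, not a curve on $J(X)$; and in the direction you actually need, a rational curve on $J(X)$ yields no rational curve on $X$ when the fibration has no section, since $X$ and $J(X)$ are in general not birational. The paper's entire first construction exists to bridge this: using $\Sh(J(X))\cong\mathrm{Br}(J(X))$ and divisibility of the prime-to-$p$ part of the Brauer group, it produces for each large prime $\ell$ an auxiliary elliptic K3 surface $X_\ell$ with $J^\ell(X_\ell)\cong X$ and degree $d_{X_\ell}=\ell d_X$, takes a low-degree rational multisection $\mathcal{M}_\ell$ on $X_\ell$ (Bogomolov--Mumford), and pushes it to $X$ by the fibrewise sum map $\eta_\ell:X_\ell\dashrightarrow J^\ell(X_\ell)\cong X$. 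One cannot simply ``work with $J(X)$''.

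Second, in part (1) your iteration $[N](D)$ produces infinitely many curves only if $D$ is non-torsion, and you have not exhibited such a $D$: the zero section is torsion, components of singular fibers are vertical, and ``translating if necessary'' presupposes a non-torsion section, i.e.\ positive Mordell--Weil rank, which can fail. This is precisely where the paper's key positive-characteristic input enters: by Hall's theorem the mod-$\ell$ monodromy of a non-isotrivial fibration is surjective for $\ell\gg 0$, so the primitive $\ell$-torsion locus of $J(X)$ is an irreducible multisection of degree $\ell^2-1$; any multisection of degree $<\ell^2-1$ containing two points of a fiber differing by primitive $\ell$-torsion would have to dominate it, a contradiction. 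Applied to $\mathcal{M}_\ell$ of degree $\ell d_X<\ell^2-1$, this shows $\eta_\ell$ is injective on $\mathcal{M}_\ell$, whose image is then a rational curve on $X$ of degree divisible by $\ell$. Your height argument addresses only the distinctness of the $[N](D)$, not the existence of a valid starting curve, and canonical heights on the Mordell--Weil lattice apply to sections rather than to multisections of higher degree. Finally, your sketch of part (2) names plausible sources of rational curves but not a proof: exceptional components of a resolution are finitely many, and auxiliary non-isotrivial fibrations need not exist. The paper instead bounds the geometric genus of the torsion multisection via Riemann--Hurwitz to force $c(J)\le 0$, then combines Shioda--Tate with the $\ell$-adic Euler-number formula (where $\mathrm{char}(k)\neq 2,3$ is used to kill the wild conductors) to conclude that $X$ is either a Kummer surface or has infinite automorphism group, both of which are known to carry infinitely many rational curves.
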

In characteristic zero, this is the content of \cite[Corollary 3.28]{bt2} and \cite[Section 9]{hassett}. When $k$ has positive characteristic, the main ingredients in case $(1)$ are a result on the image of $\ell$-adic monodromy representations associated to non-isotrivial $1$-dimensional families of elliptic curves, see \Cref{independence}. The proof is inspired from \cite{bt2}, though we simplify some arguments presented there. The proof in case $(2)$ follows the arguments of Hassett in \cite[Section 9]{hassett}. This note is split into two parts. In the first section, some background on elliptic K3 surfaces is recalled. The main result is proved in the second section. 
\subsection{Acknowledgements}It is a great pleasure to thank François Charles for his discussions and valuable remarks about this note. Further thanks go to Frank Gounelas, Quentin Guignard, Max Lieblich and Gebhard Martin for valuable remarks and discussions on this text. This project has received funding from the European Research Council (ERC) under the European Union’s Horizon 2020 research and innovation programme (grant agreement No 715747).

\section{Background on elliptic K3 surfaces}
Let $k$ be an algebraically closed field of positive characteristic and $\mathbb{P}^{1}_{k}$ the projective line over $k$. We recall some facts about elliptic K3 surfaces. For a more comprehensive introduction, see \cite[Chapter 11]{huybrechts}. 
\bigskip

An elliptic K3 surface is a K3 surface $X$ which admits a surjective morphism $X\xrightarrow{\pi} \P^1_{k}$ whose generic fiber is a smooth integral curve of genus $1$. If moreover the morphism $\pi$ admits a section, then $X$ is said to be a Jacobian elliptic K3 surface. The fibration is said to be non-isotrivial if not all the smooth fibers are isomorphic. For Jacobian elliptic K3 surfaces, the latter condition is equivalent to the fact that the $j$-invariant of the generic fiber is not in $k$.

\subsection{Tate-Shafarevich group} Let $X\xrightarrow{\pi} \P^1_k$ be an elliptic K3 surface. For every integer $d\geq 0$, one can associate to $X$ an elliptic K3 surface $J^d(X)$ as follows. If $\eta$ denotes the generic point of $\P^1_{k}$, then the generic fiber $X_\eta$ over $k(\eta)$ is a smooth integral curve of genus $1$. Then one can associate to it a smooth curve of genus $1$, $Jac^{d}(X_\eta)$, which coarsly represents the étale sheafification of the functor $$\mathrm{Pic}^d:(\mathrm{Sch}/k(\eta))^{\circ}\rightarrow (\mathrm{Sets}),\,S\mapsto \mathrm{Pic}^d(X_\eta\times S)/\sim.$$ 
Then $J^d(X)\rightarrow \P^1_{k}$ is defined as the unique relatively minimal smooth model of  $Jac^d(X_\eta)$. For $d=0$, we denote it simply $J(X)$ and it is a Jacobian elliptic K3 surface, see\cite[Chap.11, Section 4.1]{huybrechts} or \cite[Thm. 5.3.1]{cossec} for more details. For every smooth fiber $X_t,\, t\in \P^1_k$, the fiber $J(X)_t$ is isomorphic to the Jacobian elliptic curve associated to $X_t$. Let $J(X)^{sm}\subset J(X)$ be the open set of $\pi$-smooth points, viewed as a smooth group scheme over $\P^1_k$. Then the open $\pi$-smooth locus $X^{sm}\rightarrow \P^1_k$ is a $J(X)^{sm}$-torsor over $\P^1_k$. Hence for an arbitrary Jacobian elliptic K3 surface $Y\rightarrow \mathbb{P}^1_k$, define the \textit{Tate-Shafarevich group} $\Sh(Y)$ as the set of isomorphism classes of $Y^{sm}$-torsors over $\P^1_k$. The group structure on $\Sh(Y)$ depends on the choice of the section, however the isomorphism class does not. 
 
\begin{proposition}[Chap.11, Section 5.2, 5.5(i), 5.6 \cite{huybrechts}]\label{brauer}
Let $X\rightarrow \P^1_k$ be a Jacobian elliptic K3 surface. The Tate-Shafarevich group $\Sh(X)$ is isomorphic to the Brauer group $\mathrm{Br}(X)$ of $X$ and we have an injective map $$\Sh(X)\hookrightarrow WC(X_\eta),$$
where $WC(X_\eta)$ is the Weil-Châtelet group of the generic fiber of $X\rightarrow \P^1_k$.
\end{proposition}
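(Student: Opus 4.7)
The plan is to attack both statements by analysing the Leray spectral sequence for $\pi \colon X \to \P^{1}_{k}$ with coefficients in $\mathbb{G}_{m}$, and then restricting to the generic fibre. Since $X$ is Jacobian, there is a section, so the $\pi$-smooth locus $X^{sm}$ has the structure of a commutative group scheme over $\P^{1}_{k}$, and by definition $\Sh(X)=H^{1}_{\text{\'et}}(\P^{1}_{k},X^{sm})$. The goal is to relate this $H^{1}$ to $\mathrm{Br}(X)=H^{2}_{\text{\'et}}(X,\mathbb{G}_{m})$ on one side, and to $H^{1}(\operatorname{Spec} k(\eta),X_{\eta})=WC(X_{\eta})$ on the other.

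First I would set up the low-degree terms of
$$E_{2}^{p,q}=H^{p}\bigl(\P^{1}_{k},R^{q}\pi_{*}\mathbb{G}_{m}\bigr)\;\Longrightarrow\;H^{p+q}(X,\mathbb{G}_{m}).$$
Because $\pi$ has geometrically integral generic fibre, $\pi_{*}\mathbb{G}_{m}=\mathbb{G}_{m}$, and Tsen's theorem gives $\mathrm{Br}(\P^{1}_{k})=H^{2}(\P^{1}_{k},\mathbb{G}_{m})=0$, together with $H^{p}(\P^{1}_{k},\mathbb{G}_{m})=0$ for $p\geq 2$. The next step is to identify $R^{1}\pi_{*}\mathbb{G}_{m}$ with the relative Picard sheaf and to compare it with $X^{sm}$; the quotient sheaf $R^{1}\pi_{*}\mathbb{G}_{m}/X^{sm}$ is a skyscraper sheaf on $\P^{1}_{k}$ supported at the discriminant locus, whose stalks are given by Kodaira's classification of singular fibres. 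Since $\P^{1}_{k}$ is simply connected and skyscraper sheaves contribute nothing to higher \'etale cohomology of $\P^{1}_{k}$, the inclusion $X^{sm}\hookrightarrow R^{1}\pi_{*}\mathbb{G}_{m}$ induces an isomorphism on $H^{1}$. Combining this with the edge map from the spectral sequence yields the desired identification $\Sh(X)\xrightarrow{\sim}\mathrm{Br}(X)$.

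For the second assertion, the natural map $\Sh(X)\to WC(X_{\eta})$ sends an $X^{sm}$-torsor $T\to\P^{1}_{k}$ to its pullback to the generic point, which is an $X_{\eta}$-torsor over $k(\eta)$. To show injectivity, suppose $T_{\eta}$ is a trivial $X_{\eta}$-torsor. Then $T_{\eta}\cong X_{\eta}$ as a curve of genus one, so that $T$ and $X$ are two relatively minimal smooth models of the same elliptic curve over $k(\eta)$. By the uniqueness of such a model, $T$ and $X^{sm}$ are isomorphic as $\P^{1}_{k}$-schemes, and a short check ensures that the torsor structure is also transported, so $[T]=0$ in $\Sh(X)$.

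The main obstacle in this strategy is the middle step: controlling the local contribution of $R^{1}\pi_{*}\mathbb{G}_{m}/X^{sm}$ at the singular fibres. In characteristic zero one can invoke Kodaira's list directly, but in positive characteristic one must use the analogue due to N\'eron/Tate to identify the component groups at wild fibres (especially for $\mathrm{char}(k)=2,3$, where additive fibres occur). Verifying that these skyscraper contributions do not disturb either the isomorphism with $\mathrm{Br}(X)$ or the injectivity into $WC(X_{\eta})$ is the delicate point; once this is done, the remainder follows cleanly from the Leray machinery and Tsen's theorem.
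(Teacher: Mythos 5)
The paper does not actually prove this proposition---it is quoted directly from Huybrechts' book---and your sketch reproduces the standard argument given in that reference: the Leray spectral sequence for $\mathbb{G}_m$ along $\pi$, Tsen's theorem, and restriction to the generic fibre. So the architecture is the right one. Three points in the middle of your argument need repair, however. First, the quotient $R^1\pi_*\mathbb{G}_m/X^{sm}$ is \emph{not} a skyscraper sheaf: already at the generic point the cokernel of $X_\eta\cong\mathrm{Pic}^0(X_\eta)\hookrightarrow\mathrm{Pic}(X_\eta)$ is $\Z$, given by the degree. The quotient is an extension involving the constant sheaf $\Z$ and a skyscraper on the discriminant recording the fibre components; the argument survives because $H^1(\P^1_k,\Z)=H^2(\P^1_k,\Z)=0$ and the degree is split by the section, but this has to be said. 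Second, to get that $H^1(\P^1_k,X^{sm})\to H^1(\P^1_k,R^1\pi_*\mathbb{G}_m)$ is \emph{injective} (not just surjective) you need more than the vanishing of $H^1$ of the quotient: you need $\mathrm{Pic}(X)=H^0(\P^1_k,R^1\pi_*\mathbb{G}_m)$ to surject onto $H^0$ of the quotient, which holds because the fibre components are honest divisors on $X$. Third, Tsen's theorem on the base only kills $E_2^{2,0}$ and $E_2^{3,0}$; to identify $H^1(\P^1_k,R^1\pi_*\mathbb{G}_m)$ with all of $\mathrm{Br}(X)$ rather than a subgroup you also need $R^2\pi_*\mathbb{G}_m=0$, which is Artin's theorem (Tsen applied fibrewise over strictly henselian local rings).

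For the injection $\Sh(X)\hookrightarrow WC(X_\eta)$, your minimal-model argument leaves a real loose end: $T$ is a torsor under the non-proper group scheme $X^{sm}$, so it is not itself a relatively minimal proper model, and the ``short check'' that an abstract isomorphism of $\P^1_k$-schemes trivializes the torsor is exactly the point at issue. The clean route is to use that $X^{sm}$ is the N\'eron model of $X_\eta$ (this is where positive characteristic and the bad fibres are genuinely harmless, since the residue fields of $\P^1_k$ are perfect), so that $X^{sm}=j_*X_\eta$ as \'etale sheaves for $j:\eta\hookrightarrow\P^1_k$ by the N\'eron mapping property; the low-degree Leray sequence for $j$ then gives $H^1(\P^1_k,j_*X_\eta)\hookrightarrow H^1(\eta,X_\eta)=WC(X_\eta)$ with no further case analysis of singular fibres needed.
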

Recall that the Brauer group of $X$ is defined as the étale cohomology group $H^{2}(X,\mathbb{G}_m)$ and recall also that for an elliptic curve $E$ over a field $K$, the Weil-Châtelet group, denoted $WC(E)$, is defined as the set of isomorphism classes of torsors under $E$ over $K$, see \cite[Chapter 11, Section 5.1]{huybrechts}. 
\bigskip

For every positive integer $d$ and for every smooth fiber $X_t,\, t\in \P^1_k$, $J^d(X)_t$ is isomorphic to $\mathrm{Pic}^d(X_t)$.   
Moreover, one has an isomorphism 
\begin{center}
\begin{tikzpicture}[scale=0.75]

\node (X) at (0,0) {$X$};

\node (Xp) at (5,0) {${J}^1(X)$};
\node (P1) at (2.5,-3) {$\P^1_k$};

\draw[->] (X)--node[above] {$\sim$} (Xp);
\draw[->] (Xp)--node[right]{$\pi_{1}$}(P1);
\draw[->] (X)--node[left]{$\pi$}(P1);
\end{tikzpicture}
\end{center} and $J(J^d(X))\simeq J(X)$. In addition, the class $[J^d(X)]$ of $J^d(X)$ in $\mathrm{Br}(J(X))$ is equal to $d[X]$. 
\bigskip

For every integers $d,d'$, we have natural rational maps of algebraic varieties 
\begin{center}
\begin{tikzpicture}[scale=0.75]
\node (X) at (0,0) {$J^d(X)\times_{\P^1_k} J^{d'}(X)$};
\node (Xp) at (5,0) {$J^{d+d'}(X)$};
\node (P1) at (2.5,-3) {$\P^1_k$};

\draw[dashed,->] (X)--node[above] {} (Xp);
\draw[->] (Xp)--node[right]{}(P1);
\draw[->] (X)--node[left]{}(P1);
\end{tikzpicture}
\end{center}

For a positive integer $\ell$, the diagonal embedding $$ J^1(X)\rightarrow \underbrace{J^1(X)\times_{\mathbb{P}^{1}_{k}}\dots\times_{\mathbb{P}^{1}_{k}} J^1(X)}_{\ell\, \textrm{times}}$$ composed with the rational map above defines a rational map $\eta_\ell$ which fits into the following commutative diagram
\begin{center}\label{sum}
\begin{tikzpicture}[scale=0.75]
\node (X) at (0,0) {$J^1(X)$};
\node (Xp) at (5,0) {$J^\ell(X)$};
\node (P1) at (2.5,-3) {$\P^1_k$};

\draw[dashed,->] (X)--node[above] {$\eta_\ell$} (Xp);
\draw[->] (Xp)--node[right]{$\pi_{\ell}$}(P1);
\draw[->] (X)--node[left]{$\pi$}(P1);
\end{tikzpicture}
\end{center}
The map $\eta_\ell$ is defined over the smooth locus of $\pi$. 
\subsection{Rational curves}
Let $X$ be a K3 surface over $k$. A rational curve on $X$ is an integral closed subscheme $C$ of dimension $1$ and of geometric genus $0$. Recall the following existence result, attributed to Bogomolov and Mumford, with a refinement of Li and Liedtke (\cite[Theorem 2.1]{ll}). 
\begin{proposition}[Bogomolov-Mumford]\label{BM}
Let $L$ be a non-trivial effective line bundle on a K3 surface $X$ over $k$. Then $L$ is linearly equivalent to a sum of effective rational curves. 
\end{proposition}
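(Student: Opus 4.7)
The plan is to follow the moduli-theoretic approach of Bogomolov and Mumford, extended to positive characteristic by Li and Liedtke. The strategy reduces the problem to deformation theory for nodal rational curves on K3 surfaces combined with a single explicit example where the desired decomposition holds.

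First I would handle the degenerate case $L^2 = -2$ directly by Riemann--Roch: one of $L$ or $-L$ is effective, and the effective divisor is forced to be a smooth rational $(-2)$-curve, so we may assume $L^2 = 2d > 0$. Place $(X, L)$ in the moduli space $\mathcal{M}_{2d}$ of primitively quasi-polarized K3 surfaces of degree $2d$ (reducing to the primitive case beforehand by absorbing an integer multiple), which is irreducible in arbitrary characteristic. Let $\mathcal{N} \subset \mathcal{M}_{2d}$ denote the constructible locus of pairs $(X', L')$ such that $L'$ is linearly equivalent to a sum of rational curves; the goal is to show $\mathcal{N} = \mathcal{M}_{2d}$.

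The crucial deformation-theoretic ingredient is that on K3 surfaces the vanishing $H^1(X, \mathcal{O}_X) = 0$ forces nodal rational curves to deform in families of the expected dimension, so that if $L'$ is represented by a sum of nodal rational curves on some $(X_0, L_0) \in \mathcal{M}_{2d}$ then nearby members of the versal deformation carry an analogous decomposition. To exhibit one such $(X_0, L_0)$, I would point to an explicit K3 surface where the decomposition is visible --- for instance a Kummer surface whose polarization is built out of the sixteen exceptional $(-2)$-curves together with pullbacks from the underlying abelian surface, or a sufficiently degenerate elliptic K3 surface where $L$ splits into components of reducible fibers and sections.

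The main obstacle in positive characteristic, and the substance of the Li--Liedtke refinement, is carrying out the moduli-theoretic density argument without the Hodge-theoretic inputs available over $\C$. The standard workaround is to lift $(X, L)$ to a characteristic zero K3 surface via the unobstructed deformation theory of K3 surfaces, apply the classical Bogomolov--Mumford theorem on the lift, and then specialize the resulting family of rational curves back down to $k$ over a discrete valuation ring. The delicate point is that components of such specializations can a priori fail to remain rational, and the technical heart of \cite{ll} is precisely the verification that this potential pathology does not occur.
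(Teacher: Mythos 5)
First, note that the paper does not prove this proposition at all: it is recalled as a known result and delegated entirely to \cite[Theorem 2.1]{ll} (the Bogomolov--Mumford argument as written up by Mori--Mukai, extended to positive characteristic by Li--Liedtke). Your sketch is therefore reconstructing the proof of the cited black box, and it does follow the right overall strategy: reduce to a deformation/moduli statement, exhibit one special polarized K3 where the class visibly decomposes into rational curves, and propagate over the irreducible moduli space (or, in positive characteristic, lift to characteristic zero and specialize).

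There is, however, a genuine error in your opening reduction. The dichotomy ``$L^2=-2$ or $L^2=2d>0$'' is not exhaustive for effective classes: $L^2=0$ occurs (e.g.\ the class of an elliptic fiber) and so do arbitrarily negative even values of $L^2$ (e.g.\ a sum of disjoint $(-2)$-curves). Moreover, an effective divisor with $L^2=-2$ is \emph{not} forced to be a smooth rational curve (a chain of three $(-2)$-curves has square $-2$); only an \emph{irreducible} curve of square $-2$ has arithmetic genus $0$ and is hence a smooth $\mathbb{P}^1$. The correct first step is to write $L$ as a sum of irreducible curves and apply adjunction componentwise: components with $C^2=-2$ are already rational, and the moduli argument is then run for each irreducible component with $C^2\geq 0$ (writing its class as a multiple of a primitive quasi-polarization). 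Separately, your sketch is vague at the point that actually closes the argument: the deformation-theoretic input only gives that the locus $\mathcal{N}$ contains an open dense subset, and one needs the properness of the space of genus-$0$ stable maps over the moduli space to conclude $\mathcal{N}=\mathcal{M}_{2d}$. That same properness is what handles specialization from characteristic zero --- a flat limit of a genus-$0$ stable map is again a genus-$0$ stable map, so its image cycle is automatically a sum of rational curves; the ``pathology'' you describe is not the delicate point of \cite{ll} (their technical work concerns forcing the limiting rational curves to be \emph{new}, which is irrelevant to the present statement).
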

\subsection{Relative effective Cartier divisors}
\begin{definition}
Let $X\rightarrow \P^1_k$ be an elliptic K3 surface. A relative effective Cartier divisor on $X/\P^1_k$ is a closed subscheme $\mathcal{M}$ on $X$ such that $\mathcal{M}\rightarrow \P^1_k$ is finite flat. If moreover $\mathcal{M}$ is irreducible, it is called a multisection. 
\end{definition}
Given an elliptic K3 surface $X$ and a multisection $\mathcal{M}$ on $X$, the map $\mathcal{M}\rightarrow \P^1_k$ is finite flat and its degree is by definition the degree of $\mathcal{M}$.
\bigskip

Let $X_0$ be a smooth fiber of $X\rightarrow \P^1_k$ over a point $0\in\mathbb{P}^{1}_{k}$. Then we have a map given by the intersection product $$\mathrm{Pic}(X)\xrightarrow{(X_0,\;)} \Z.$$
It sends any multisection to its degree. The image of the above map is a non-zero subgroup of $\Z$, of finite index. Denote by $d_{X}$ its index. It is called the degree of the elliptic fibration $X\rightarrow \P_k^1$. Remark that an elliptic fibration is Jacobian if and only if its degree is equal to one.
\begin{lemma} \label{lemma}
Let $X\rightarrow \mathbb{P}^{1}_{k}$ be an elliptic K3 surface.
\begin{enumerate}
\item The order of $[X]$ in $\mathrm{Br}(J(X))$ is equal to $d_{X}$. 
\item There exists a multisection of degree $d_{\mathcal{M}}=d_{X}$ which is a rational curve. 
\item There exists at least one multisection $\mathcal{M}$ such that $d_{\mathcal{M}}=d_{X}$ and which is moreover generically étale over $\P^1_k$.
\end{enumerate}
\end{lemma}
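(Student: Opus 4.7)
My plan is to prove the three parts in order, with (1) and (2) being essentially algebraic consequences of the setup and (3) requiring a Bertini-type argument.

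For (1), the starting point is the identity $[J^d(X)] = d[X]$ in $\mathrm{Br}(J(X))$ recalled in the background. A Jacobian elliptic K3 surface $Y\to\P^1_k$ has trivial Brauer class exactly when it admits a section, so $d[X]=0$ iff $J^d(X)\to\P^1_k$ has a section, iff $\mathrm{Pic}^d(X_\eta)(k(\eta))\neq\emptyset$. Since $X$ and $\P^1_k$ are regular, the restriction map $\mathrm{Pic}(X)\to\mathrm{Pic}(X_\eta)$ is surjective, so a line bundle of degree $d$ on $X_\eta$ exists iff there is a line bundle on $X$ of relative degree $d$, iff $d_X\mid d$ by the very definition of $d_X$. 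Hence the order of $[X]$ equals $d_X$.

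For (2), the definition of $d_X$ provides a line bundle $L_0$ on $X$ with $L_0\cdot F = d_X$. Tensoring by $\pi^{*}\mathcal{O}_{\P^1_k}(N)$ for $N\gg 0$ produces an effective line bundle $L$ of the same relative degree. Applying \Cref{BM} to $L$, one writes $L\sim\sum_i C_i$ with each $C_i$ an effective rational curve. Each $C_i$ is either vertical, contributing $0$ to the relative degree, or horizontal, in which case its degree over $\P^1_k$ is a positive element of $d_X\Z$ and hence at least $d_X$. Since $\sum_i\deg_{\P^1_k}(C_i) = d_X$, exactly one $C_i$ is horizontal, with degree exactly $d_X$; this is the desired rational multisection.

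For (3), the strategy is to work with a sufficiently positive line bundle of relative degree $d_X$ and pick a general member of its complete linear system. Starting from the rational multisection $R$ of (2), I would consider $L := \mathcal{O}_X(R + E)$, where $E$ is an effective vertical divisor (a large multiple of a fiber, possibly adjusted by components of reducible fibers) chosen so that $L$ is ample; note that $L\cdot F = d_X$. Fix a smooth fiber $X_{t_0}$. The restriction $L|_{X_{t_0}}$ is a line bundle of degree $d_X$ on the elliptic curve $X_{t_0}$, whose complete linear system has a general member consisting of $d_X$ distinct points (trivially so when $d_X=1$). By Kodaira-type vanishing for ample line bundles on K3 surfaces, $H^1(X, L-F) = 0$; hence the restriction map $H^0(X, L)\to H^0(X_{t_0}, L|_{X_{t_0}})$ is surjective, and a general $\mathcal{M}\in|L|$ meets $X_{t_0}$ in $d_X$ distinct points. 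Such an $\mathcal{M}$ is then étale over $t_0$, and therefore generically étale over $\P^1_k$.

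The main obstacle I anticipate is twofold. First, one needs to verify that an effective vertical $E$ making $L = R + E$ ample always exists; this reduces to a statement about the position of the class $[R]$ relative to the Mori cone of $X$ and the vertical sublattice, and should be achievable by standard analysis of the Kodaira–Néron classification of singular fibers. Second, one must show that a general $\mathcal{M}\in|L|$ is irreducible: by Riemann--Roch on the K3 surface, a decomposition $[L]=[L_1]+[L_2]$ into nontrivial effective classes yields a positive-dimensional family of reducible members of $|L|$ only if $L_1\cdot L_2 < 2$, and for $L$ sufficiently positive this fails on the finitely many relevant numerical decompositions, so the reducible locus is a proper closed subset of $|L|$. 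A general $\mathcal{M}$ then lies outside both this locus and the non-reduced-on-$X_{t_0}$ locus, providing the generically étale irreducible multisection of degree $d_X$.
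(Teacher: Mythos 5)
Your parts (1) and (2) are essentially sound. Part (2) is the paper's own argument (decompose an effective class of relative degree $d_X$ via \Cref{BM} and observe that exactly one summand is horizontal, necessarily of degree exactly $d_X$ since every horizontal curve has degree a positive multiple of $d_X$); your extra step of twisting by $\pi^*\mathcal{O}_{\P^1_k}(N)$ to ensure effectivity is a welcome precision. Part (1) takes a genuinely different route: the paper identifies $d_X$ with the index of the genus-one curve $X_\eta$ and invokes Lichtenbaum's theorem that period equals index, whereas you argue directly that $d[X]=[J^d(X)]$ vanishes iff $J^d(X)\to\P^1_k$ has a section iff $X_\eta$ carries a degree-$d$ line bundle. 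The one point you elide is that a $k(\eta)$-point of the coarse space $Jac^d(X_\eta)$ (a Galois-invariant divisor class) need not a priori be represented by an honest line bundle on $X_\eta$; the obstruction lives in $\mathrm{Br}(k(\eta))$, which vanishes by Tsen's theorem since $k(\eta)=k(\P^1_k)$ with $k$ algebraically closed. With that supplied, your argument is a clean, self-contained alternative.

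Part (3) is where the real gap lies. The paper gets (3) almost for free from the same source as (1): Lichtenbaum also shows that the index equals the \emph{separable} index, so $X_\eta$ has a separable closed point of degree $d_X$, whose closure is a generically étale multisection. Your Bertini-style substitute has several unrepaired holes. First, when $d_X=1$ the curve $R$ is a section with $R^2=-2$, and for $L=R+nF$ one has $h^0(L)=n+1=h^0(nF)$, so $R$ is a fixed component of $|L|$ and \emph{every} member is reducible; the construction collapses (the case is of course trivial, since $R$ itself is étale over $\P^1_k$, but you must separate it out). Second, your irreducibility criterion is backwards: Riemann--Roch gives $\dim|L_1|+\dim|L_2|\geq\dim|L|+1-L_1\cdot L_2$ only as a \emph{lower} bound, and the upper bound you need acquires correction terms $h^1(L_i)$, which are nonzero precisely for the vertical pieces $L_2=mF$ (where $h^1(mF)=m-1$); so "$L_1\cdot L_2\geq 2$" does not by itself force the reducible locus to be proper, and one should instead invoke the Saint-Donat-type structure theory of big and nef linear systems on K3 surfaces (valid in all characteristics) to rule out fixed components when $d_X\geq 2$. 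Third, since the paper works in positive characteristic, "Kodaira-type vanishing" must be replaced by the K3-specific statement that $H^1(X,M)=0$ for $M$ big and nef, and the reducedness/irreducibility of a general member of a base-point-free system requires the characteristic-$p$ versions of Bertini. These can all be patched, but as written the argument does not close, and it is considerably heavier than the one-line deduction from \cite[Theorem 4]{lichtenbaum} that the paper uses.
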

\begin{proof}[Proof]
For $(2)$, let $\mathcal{M}$ be a multisection of degree $d_{X}$. By \Cref{BM}, $\mathcal{M}$ is linearly equivalent to a sum of rational curves $\sum_{i}C_i$. Then there exists a unique curve $C_i$ which is horizontal and all the others are vertical. Then $C_i$ satisfies the desired properties.
 
For $(1)$, notice that $X_{\eta}$ is a torsor under the elliptic curve $J(X)_{\eta}$ and that $d_X$ is the index of $X_{\eta}$, i.e is the greatest common divisor of the degrees of residue fields of closed points of $X_{\eta}$ (see \cite[1]{lichtenbaum}). Since the order of $X_\eta$ in $WC(J(X)_{\eta})$ is equal to its index by \cite[Theorem 1]{lichtenbaum}, it implies that the order of $[X]$ is exactly $d_X$. By \cite[Section 5, Theorem 4]{lichtenbaum}\footnote{More precisely, see the proof given there.}, it is also equal to the minimal degree of residue fields of separable closed points. Hence there exists a closed separable point in $X_{\eta}$ of degree $d_X$. Taking its closure yields a separable multisection. This proves $(3)$.
\end{proof}


\subsection{Monodromy}
Let $X\xrightarrow{\pi} \P^1_k$ be an elliptic K3 surface. Let $U$ be the largest Zariski open subset of $\mathbb{P}^{1}_k$ over which the map $\pi$ is smooth. Thus $X_U\rightarrow U$ is a torsor under the smooth group scheme $J(X)_{U}\rightarrow U$. For $b\in U$ a closed point and $m$ prime to $p:=\mathrm{char}(k)$, the étale fundamental group $\pi_{1}^{\textrm{ét}}(U,b)$ of $U$ acts on the group of $m$-torsion points in $J(X)_b$ and defines a group morphism 
\begin{align*}
\rho:\pi^{\textrm{ét}}_{1}(U,b)\rightarrow \mathrm{Aut}\left(\lim_{\underset{\gcd(m,p)=1}{\longleftarrow}}J(X)_{b}[m]\right)=\prod_{\gcd(\ell,p)=1}\mathrm{Aut}(\mathrm{T}_{\ell}J(X)_{b}).
\end{align*}
This action preserves the Weil paring and factors as follows: 
\begin{align*}
\rho:\pi^{\textrm{ét}}_{1}(U,b)\rightarrow \prod_{\ell\wedge p=1}\mathrm{SL}(\mathrm{T}_{\ell}J(X)_{b}).
\end{align*}

For every prime $\ell$, we denote by $\rho_{\ell^\infty}$ the representation of $\pi_{1}^{\textrm{ét}}(U,b)$ on the Tate module $\mathrm{T}_{\ell}J(X_{b})$ and denote by $\rho_\ell$ its reduction modulo $\ell$. Then $\rho_{\ell^\infty}$ is simply the projection on the $\ell$-factor in the previous map. The monodromy group $\Gamma$ is the image of $\pi^{\textrm{ét}}_{1}(U,b)$ under $\rho$. The next result on the image of the monodromy group will be crucial in the proof of \Cref{principal}. 

\begin{proposition}[\cite{hall}]\label{independence}
If the elliptic fibration is not isotrivial, then there exists a constant $c(k)$ depending only on $k$, such that for every $\ell>c(k)$ the morphism $\rho_{\ell}$ is surjective. 
\end{proposition}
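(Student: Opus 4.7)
The plan is to reduce the surjectivity of $\rho_\ell$ to Dickson's classification of the maximal subgroups of $\mathrm{SL}_2(\mathbb{F}_\ell)$: for $\ell \geq 5$, any proper subgroup is conjugate to a subgroup of a Borel, of the normalizer of a split or non-split Cartan, or of an exceptional subgroup whose image in $\mathrm{PSL}_2(\mathbb{F}_\ell)$ is isomorphic to $A_4$, $S_4$, or $A_5$. The strategy is to show that the image of $\rho_\ell$ avoids all three families once $\ell$ exceeds a constant $c(k)$ depending only on $k$.

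The key geometric input is the existence of a transvection in the image. Non-isotriviality forces the $j$-invariant of the generic fiber of $J(X) \to \P^1_k$ to be non-constant, so the $j$-map $\P^1_k \to \P^1_k$ is surjective and has a pole at some point $t_0$. The corresponding singular fiber has Kodaira type $I_n$ or $I_n^*$ for some $n \geq 1$, and in either case the local monodromy around $t_0$ has a power equal to a non-trivial unipotent matrix. Hence for every prime $\ell$ distinct from $\mathrm{char}(k)$ and not dividing $2n$, the image of $\rho_\ell$ contains a non-trivial transvection.

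Armed with a transvection, two of the three Dickson cases fall away easily. The order of a transvection is exactly $\ell$, which exceeds the maximal order $120$ of an exceptional subgroup once $\ell > 120$. On the other hand, every element of the normalizer of a Cartan subgroup has order dividing $2(\ell-1)$ or $2(\ell+1)$, hence coprime to $\ell$, ruling out that case for all $\ell \geq 3$. What remains is the Borel subgroup case, and this is where I expect the main obstacle to lie.

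Ruling out the Borel case for $\ell$ large amounts to a uniform mod-$\ell$ irreducibility statement for $\rho_\ell$. Non-isotriviality yields $\ell$-adic irreducibility of each individual $\rho_{\ell^\infty}$ by standard arguments: a Galois-invariant line in the Tate module would give rise to a constant isogeny factor of the family $J(X) \to \P^1_k$, contradicting the non-constancy of $j$. To upgrade this to a uniform bound, the plan is to argue by contradiction: if $\rho_\ell$ were reducible for infinitely many $\ell$, one would produce an infinite sequence of cyclic $\ell$-isogenies of the generic fiber of $J(X)$ defined over $k(\eta)$, contradicting the function-field analogue of Faltings' isogeny theorem. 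Establishing this uniform irreducibility is the heart of Hall's argument \cite{hall} and is the main obstacle in the proof.
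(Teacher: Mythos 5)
The paper does not actually reprove this statement: it invokes \cite[Theorem 1.1]{hall} directly, together with the remark that Hall's argument works over perfect rather than just finite base fields. So the relevant question is whether your sketch amounts to a proof on its own, and it does not. The reduction is sound as far as it goes: a pole of the non-constant $j$-invariant forces a fibre of type $I_n$ or $I_n^{*}$, whose local monodromy contributes a non-trivial transvection to the image of $\rho_\ell$ once $\ell \nmid 2n$ (and for a K3 surface $n\le 24$, so this condition is uniform); an element of order $\ell$ then excludes the normalizer-of-Cartan and exceptional cases. In fact you do not need Dickson in full: for $\ell\ge 5$, a subgroup of $\mathrm{SL}_2(\mathbb{F}_\ell)$ containing a transvection is either contained in a Borel or equals $\mathrm{SL}_2(\mathbb{F}_\ell)$, by a Sylow argument. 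But the Borel case, which you yourself flag as ``the heart'' and ``the main obstacle,'' is precisely the content of the proposition, and leaving it as a plan means the proof has a genuine gap at the only hard step.

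Moreover, the route you propose for closing that gap is the wrong tool here. A Faltings-type isogeny or finiteness statement over a function field of characteristic $p$ is delicate (Zarhin--Mori; inseparable isogenies and the supersingular locus cause real problems), and even where available it would yield an ineffective bound depending on the surface, not the constant $c(k)$ asserted in the statement. The standard and much softer argument is modular: if the image of $\rho_\ell$ lies in a Borel, the generic fibre of $J(X)\to\P^1_k$ acquires a $k(\eta)$-rational cyclic subgroup of order $\ell$, hence a point of $Y_0(\ell)(k(\eta))$ and therefore a morphism $\P^1_k\to X_0(\ell)$ over $k$ (for $\ell\neq p$ the curve $X_0(\ell)$ is smooth over $k$ with the same genus as in characteristic zero). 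Non-isotriviality makes this morphism non-constant, since its composition with the $j$-line is the $j$-invariant of the family; and because the genus cannot increase under a non-constant morphism of smooth projective curves in any characteristic, this forces $g(X_0(\ell))=0$, i.e.\ $\ell\le 13$. That closes the Borel case with an absolute constant and is, in substance, how the reducible case is excluded in the source the paper cites; without this (or an equivalent) step, your argument does not establish the proposition.
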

This is the content of \cite[Theorem 1.1]{hall} where the surjectivity is proven for the reduction modulo $\ell$, then one uses Lemma $2$ in \cite[IV-23]{serreladic}. Notice that in \cite[Theorem 1.1]{hall}, the base field is supposed to be finite but one can check that the proof given there works for perfect fields, as mentioned in the discussion after Theorem 1.1 in {\it loc.cit}. 

 
\section{Proof of Theorem \ref{principal}}\label{part2} 
If $X$ has Picard rank $\rho(X)$ at least $20$, then the automorphism group of $X$ is infinite and hence $X$ contains infinitely many rational curves, see \cite[Chap.13, Remark 1.6]{huybrechts} and \cite[Theorem 4.1]{bt2}. Hence we assume that $\rho(X) \leq 19$. 

The elliptic surface $X$ defines a class in the Tate-Shafarevich group $\Sh(J(X))$ of $J(X)$, which is isomorphic to the Brauer group $\mathrm{Br}(J(X))$ by  \Cref{brauer}. This class is a sum of two elements $\alpha_{p}+\alpha$, where $\alpha$ has torsion prime to $p$ and $\alpha_p$ is torsion of order $p^a$, for some integer $a$. Here $p$ is the characteristic of $k$. We will construct infinitely many multisections on $X$ which are rational curves and whose degrees tend to infinity. This will be enough to prove \Cref{principal}. Denote by $d_{X}$ the degree of $X$ and let $\ell$ be a prime number with residue $1 \pmod {p^a} $ and such that $\ell>\max(d_X,c(k))$, where $c(k)$ is given by \Cref{independence}. The prime to $p$ torsion part of $\mathrm{Br}(J(X))$ is a divisible group by \cite[Chap. 18, Example 1.5]{huybrechts}. 
The Kummer exact sequence and the assumption on the Picard rank ensures furthermore that it is not trivial (see formula (1.8) {\it loc. cit}).
We can thus find an elliptic K3 surface $\pi_\ell:X_\ell\rightarrow \mathbb{P}^{1}$ such that $J(X_{\ell})\simeq J(X)$, $\ell[X_{\ell},\pi_\ell]=[X,\pi]$ in $\mathrm{Br}(J(X))$ and $d_{X_\ell}=\ell d_{X}$. Take for instance $\alpha_{p}+\alpha_{\ell}$, where $\alpha_{\ell}$ is a non-trivial element in $\mathrm{Br}(J(X))$ which satisfies $\ell.\alpha_{\ell}=\alpha$. Hence $J^\ell(X_{\ell})\simeq X$ and we have a rational map defined at the end of section 2.1: 
\begin{center}
\begin{tikzpicture}[scale=0.75]
\node (X) at (5,0) {$X$};
\node (Xp) at (0,0) {$X_\ell$};
\node (P1) at (2.5,-3) {$\P^1_k$};

\draw[dashed,->] (Xp)--node[above] {$\eta_\ell$} (X);
\draw[->] (X)--node[right]{$\pi$}(P1);
\draw[->] (Xp)--node[left]{$\pi_{\ell}$}(P1);
\end{tikzpicture}
\end{center}
By \Cref{lemma}, $X_\ell$ contains a rational  multisection $\mathcal{M}_\ell$ of degree $d_{\mathcal{M}_\ell}=d_{X_\ell}=\ell d_{X}$. If the restriction of $\eta_\ell$ to $\mathcal{M}_\ell$ is isomorphic to its images above $\mathbb{P}_{k}^1$ then $\eta_\ell(\mathcal{M}_\ell)$ is a rational curve on $X$ of degree divisible by $\ell$ which is the desired result. Otherwise, since the multiplication by $\ell$ map is étale (by \cite[Théorème 2.5]{grothendieckfga}), there exists infinitely many closed points $b$ in the maximal open subset $U\subset \P^1_k$ where $\pi$ is smooth, $\mathcal{M}_{\ell,U}\rightarrow U$ is smooth and two distinct points $P_1$, $P_2$ in $X_{\ell,b}\cap \mathcal{M}_\ell$ such that $\ell.(P_1-P_2)=0$ in $J(X)_b$. Thus, the point $P_1-P_2$ is a $\ell$-primitive torsion point in $J(X)_b$. 
Let $J(X)_U[\ell]\rightarrow U$ be the relative effective Cartier divisor of $J(X)_U\rightarrow U$ of $\ell$-torsion points. 

Let $J(X)_{U,prim}[\ell]$ be the relative effective Cartier divisor of non-zero $\ell$-torsion points. Since $X_{\ell,U}$ is a $J(X)_U$-torsor over $U$, there is an induced map: 
\begin{align}\label{component}
J(X)_{U,prim}[\ell]\times\mathcal{M}_{\ell,U}\rightarrow X_{\ell,U}.
\end{align}
The closure of the image in $X_{\ell}$ is a curve of $X_{\ell}$ which intersects $\mathcal{M}_{\ell}$ infinitely many times by the non-injectivity of $\eta_\ell$. Hence $\mathcal{M}_{\ell}$ is isomorphic to an irreducible component of $J(X)_{U,prim}[\ell]\times_U\mathcal{M}_{\ell,U}$.

\subsection{Non-isotrivial case} 
For $\ell$ large enough, $J(X)_{U,prim}[\ell]$ is irreducible by \Cref{independence}. Hence via its first projection, the above map is surjective over $J(X)_{U,prim}[\ell]$. Since there are $\ell^2-1$ torsion points in each fiber of $J(X)_{U,prim}[\ell]$ over $U$, this implies $$d_{\mathcal{M}_{\ell}}=\ell d_{X}\geq \ell^2-1.$$ This is a contradiction by our assumption on $\ell$.

\subsection{Isotrivial case}
We assume now that the elliptic fibration $X\rightarrow \mathbb{P}_k^{1}$ is isotrivial. Then the elliptic fibration $J(X)\rightarrow \mathbb{P}_k^{1}$ is also isotrivial. If the characteristic of $k$ is different from $2$ and $3$, which will be assumed henceforth, then we can proceed following the lines of \cite[Section 9]{hassett}. The image of the étale fundamental group of $U$ by $\rho_\ell$ factors through the automorphism group of the geometric generic fiber of $J(X)\rightarrow \mathbb{P}_k^{1}$ which is cyclic of order $2,4$ or $6$, see \cite[III.10]{silverman}. Assume that the fibration $J(X)\rightarrow \mathbb{P}_k^{1}$ has $n_0$ degenerate fibers of type $I^{*}_0$, $n'_1$ degenerate fibers of type $I_a$, $a>0$, $n^{''}_1$ degenerate fibers of type $I^{*}_a$, $a>0$, $n_2$ fibers of type $II$ or $II^*$, $n_3$ fibers of type $III$ or $III^*$, and $n_4$ fibers of type $IV$ or $IV^*$. For the definition of the type of singularities of fibers, see \cite[Chapter 11, Section 1.3]{huybrechts}.

By \Cref{component}, $\mathcal{M}_{\ell,U}$ is an irreducible component of a principal homogeneous space under $J(X)_{U,prim}[\ell]$. Using Riemann-Hurwitz as in the proof of \cite[Theorem 9.9]{hassett} and noticing that the computations of the ramification contributions of degenerate fibers from \cite[Table 1, page 259]{hassett} hold for $\ell$ large enough, see \cite[Chapitre III, 17]{neron}, there exists $C>0$ such that $g(\mathcal{M}_\ell)\geq C .c(J)$ where $g(\mathcal{M}_\ell)$ is the geometric genus of $\mathcal{M}_\ell$ and 
$$c(J)=\frac{1}{2}n_0+n'_1+n^{''}_1+\frac{5}{6}n_2+\frac{3}{4}n_3+\frac{2}{3}n_4-2.$$
Since $\mathcal{M}_\ell$ is a rational curve, we infer that $c(J)\leq 0$. We use now the method of \cite[Proposition 9.6]{hassett} to classify K3 surfaces that satisfy the last condition. By Shioda-Tate formula \cite[Theorem 6.3]{schutt}), we have : 
\begin{align*}
\rho(X)=2+\sum_{s\in\mathbb{P}^{1}(k)}(r_s-1)+r(X)
\end{align*}
where $r_s$ denotes the number of irreducible components of a fiber $X_s$ for $s$ a closed point in $\mathbb{P}^{1}_k$ and $r(X)$ is the rank of the Mordell-Weil group of $J(X)$. On the other hand, the $\ell$-adic Euler formula (\cite[Theorem 1.1, Corollary 1.6]{dolgachev}\footnote{With the correct sign.}) implies that: 
\begin{align}\label{ladic}
24=\sum_{s\in\mathbb{P}^1(k)}\left[\chi(X_s)_\ell+\alpha_{s,\ell}\right]
\end{align}
where, for $s\in\mathbb{P}^{1}_k(k)$, $\chi(X_s)_\ell$ is the $\ell$-adic Euler characteristic of the fiber $X_s$ and $\alpha_{s,\ell}$ is its wild conductor defined in \cite[Section 1]{dolgachev}. Recall that $r_s=\chi(X_s)_\ell$ if the fiber $X_s$ has reduction type $I_a$ and otherwise $r_s=\chi(X_s)_\ell-1$. Since the characteristic of $k$ is different from $2$ and $3$, all the wild conductors above vanish.

Combining the two previous formulas, we get: 
\begin{align*}
\rho(X)&=2+\sum_{\underset{\textrm{of type $I_a$}}{s\in\mathbb{P}^{1}_k(k)}}(r_s-1)+\sum_{\underset{\textrm{not of type $I_a$}}{s\in\mathbb{P}_k^{1}(k)}}(r_s-2)+r(X)\\
&=26-{n'}_1-2N+r(X)
\end{align*}
where $N=n_0+{n''}_1+n_2+n_3+n_4$.
The assumption that $c(J)\leq 0$ implies that
\begin{align*}
18+r(X)+3n^{'}_1+2n^{''}_1+\frac{4}{3}n_2+n_3+\frac{2}{3}n_4\leq \rho(X).
\end{align*}
Hence either $X$ has Picard rank equal to $22$, or $\rho(X)\leq 20$ and thus $X$ is an element in the list given in \cite[Proposition 9.6]{hassett}. In all these cases, $X$ is either a Kummer surface or its automorphism group is infinite. In both cases, $X$ has infinitely many rational curves, see \cite[Corollary 4.3]{bt1} and \cite[Lemma 4.9]{bt2} for the second case. 
\subsection{Situation in characteristic 2 and 3}
When the characteristic of $k$ is equal to $2$ or $3$ and the elliptic fibration $X\rightarrow \mathbb{P}_k^{1}$ is isotrivial then the classification above must be modified to take into account the wild ramification factors in \Cref{ladic} which do not vanish in general, apart from special cases, see \cite[Section 4.6, Table 2]{schutt}. For example, we could have a K3 surface with a single cusp of conductor $24$ for which $c(J)=\frac{-7}{6}$ and $\rho(X)\geq 2$. It would be interesting to investigate these small rank situations. 
\bibliographystyle{alpha}
\bibliography{bibliographie}
\end{document}